\newcommand{\wis}[1]{{\text{\em \usefont{OT1}{cmtt}{m}{n} #1}}}
\newcommand{\C}{\mathbb{C}}
\newcommand{\Z}{\mathbb{Z}}
\newcommand{\vtx}[1]{*+[o][F-]{\scriptscriptstyle #1}}
\newcommand{\Oscr}{\mathcal{O}}
\newtheorem{theorem}{Theorem}
\newtheorem{lemma}{Lemma}
\title{Most irreducible representations of the $3$-string braid group}
\author{Lieven Le Bruyn} 
\address{Department of Mathematics, University of Antwerp \\ 
 Middelheimlaan 1, B-2020 Antwerp (Belgium) \\ {\tt lieven.lebruyn@ua.ac.be}}
\begin{document}
\sloppy

\maketitle

\section{Introduction}

With $\wis{iss}_n~B_3$ we denote the affine variety of all isomorphism classes of semi-simple $n$-dimensional representations of the 3-string braid group
\[
B_3 = \langle \sigma_1,\sigma_2~|~\sigma_1 \sigma_2 \sigma_1 = \sigma_2 \sigma_1 \sigma_2 \rangle \]
It is well-known, see for example \cite{Westbury}, \cite{LBbraid1} and \cite{LBbraid2}, that any irreducible components $X_{\sigma}$ of $\wis{iss}_n~B_3$ containing a Zariski open subset of irreducible representations is determined by a dimension-vector
$\sigma=(a,b;x,y,z)$ satisfying
\[
n=a+b=x+y+z \quad \text{and} \quad x=\wis{max}(x,y,z) \leq b=\wis{min}(a,b) \]
with $dim~X_{\sigma} = n_{\sigma} = 1+n^2-(a^2+b^2+x^2+y^2+z^2)$. As $B_3$ is of wild representation type one cannot expect a full classification of all its finite dimensional irreducible representations. In fact, such a classification is only known for $n \leq 5$ by work of Imre Tuba and Hans Wenzl \cite{TubaWenzl}. Still, one can aim to describe 'most' irreducible representations by constructing for each component $X_{\sigma}$ an explicit minimal (\'etale) rational map
\[
f_{\sigma}~:~\mathbb{A}^{n_{\sigma}} \rDotsto X_{\sigma} \rInto \wis{iss}_n~B_3 \]
having a Zariski dense image. Such rational dense parametrizations were constructed in \cite{LBbraid1} for all components when $n < 12$. The purpose of the present paper is to extend this to all finite dimensions $n$. 

\section{Linear systems and some rational quiver settings}

A linear control system $\Sigma$ is determined by the system of linear differential equations
\[
\begin{cases}
\dot{x} = Ax+Bu \\
y = Cx
\end{cases}
\]
where $\Sigma=(A,B,C) \in M_n(\C) \times M_{n \times m}(\C) \times M_{p \times n}(\C)$ and $u(t) \in \C^m$ is the control at time $t$, $x(t) \in \C^n$ is the state of the system and $y(t) \in \C^p$ its output. Equivalent control systems differ only by a base change in the state space, that is $\Sigma'=(A',B',C')$ is equivalent to $\Sigma$ if and only if there exists a $g \in GL_n(\C)$ such that
\[
A'=gAg^{-1}, \qquad B'=gB \quad \text{and} \quad C' = Cg^{-1} \]
$\Sigma$ is said to be {\em canonical} if the matrices
\[
c_{\Sigma} =\begin{bmatrix} B & AB & A^2B & \hdots & A^{n-1}B \end{bmatrix} \quad \text{and} \quad o_{\Sigma} = \begin{bmatrix} C & CA & CA^2 & \hdots & CA^{n-1} \end{bmatrix} \]
are of maximal rank. 

Michiel Hazewinkel proved in \cite{Hazewinkel} that the moduli space $\wis{sys}^c_{m,n,p}$ of all such canonical linear systems is a smooth rational quasi-affine variety of dimension $(m+p)n$. We will give another short proof of this result and draw some consequences from it (see also \cite{LBReineke}).

Consider the quiver setting with $m$ arrows $\{ b_1,\hdots,b_m \}$ from left to right and $p$ arrows $\{ c_1,\hdots,c_p \}$ from right to left
\[
\xymatrix@=3cm{
\vtx{1} \ar@/^/[r]^{\vdots m} \ar@/^5ex/[r] & \vtx{n} \ar@(ur,dr) \ar@/^/[l] \ar@/^5ex/[l]_{\vdots p} }
\]
To a system $\Sigma=(A,B,C)$ we associate the quiver-representation $V_{\Sigma}$ by assigning to the arrow $b_i$ the $i$-th column $B_i$ of the matrix $B$, to the arrow $c_j$ the $j$-th row $C^j$ of $C$ and the matrix $A$ to the loop. As the base change group $\C^* \times GL_n$ acts on these quiver-representations by
\[
(\lambda,g).V_{\Sigma} = (gAg^{-1},gB_1 \lambda^{-1},\hdots,g B_m \lambda^{-1}, \lambda C^1 g^{-1}, \hdots, \lambda C^p g^{-1}) \]
with the subgroup $\C^*(1,1_n)$ acting trivially, there is a natural one-to-one correspondence between equivalence classes of linear systems $\Sigma$ and isomorphism classes of quiver-representations $V_{\Sigma}$. Under this correspondence it is easy to see that canonical systems correspond to {\em simple} quiver-representations, see \cite[Lemma 1]{LBReineke}. Hence, the moduli-space $\wis{sys}^c_{m,n,p}$ is isomorphic to the Zariski-open subset of the affine quotient-variety classifying isomorphism classes of semi-simple quiver-representations, proving smoothness, quasi-affineness as well as determining the dimension by general results, see for example \cite{LBBook}.

\begin{lemma} \label{lemma1} A generic canonical system $\Sigma$ is equivalent to a triple $(A_n,B^{\bullet}_{nm},C_{pn})$ with
\[
A_n = \begin{bmatrix} 0 & 0 & \hdots & & x_n \\
1 & 0 & \hdots & & x_{n-1} \\
& \ddots & \ddots & & \vdots \\
& & \ddots & 0 & x_2 \\
& & & 1 & x_1 \end{bmatrix} \qquad
B^{\bullet}_{nm} = \begin{bmatrix} 1 & b_{12} & \hdots & b_{1m} \\ 0 & b_{22} & \hdots & b_{2m} \\ \vdots & \vdots & & \vdots \\ 0 & b_{n2} & \hdots & b_{nm} \end{bmatrix} \]
that is, where $A_n$ is a companion $n \times n$-matrix, $B^{\bullet}_{nm}$ is the generic $n \times m$-matrix with fixed first column and $C_{pn}$ a generic $p \times n$-matrix.
\end{lemma}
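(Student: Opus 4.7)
The plan is to exhibit an explicit change of basis $g \in GL_n(\C)$ that brings a generic canonical $\Sigma = (A,B,C)$ into the required normal form. The key observation is that on a Zariski-dense open subset of canonical systems the first column $B_1$ of $B$ is already a cyclic vector for $A$, so that the matrix
\[
P := [\, B_1 \mid AB_1 \mid A^2 B_1 \mid \cdots \mid A^{n-1}B_1 \,]
\]
is invertible.

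First I would verify that this cyclicity is indeed generic. Nonvanishing of $\det P$ is a Zariski-open condition on $M_n(\C) \times M_{n\times m}(\C) \times M_{p\times n}(\C)$, and it is nonempty: the companion pair $(A_n, e_1)$ is cyclic by construction, so taking $A = A_n$, $B_1 = e_1$, arbitrary remaining columns of $B$, and a generic $C$ (to ensure $o_\Sigma$ has maximal rank as well) already produces a canonical system with $\det P \neq 0$. Thus we may restrict to the dense open subset of canonical systems on which $P$ is invertible.

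Next set $g := P^{-1}$ and check the normal form column by column. Since $Pe_i = A^{i-1}B_1$, one has
\[
gB_1 = P^{-1}B_1 = e_1 \qquad\text{and}\qquad gAg^{-1}\, e_i = P^{-1} A^i B_1 = e_{i+1} \quad (1 \leq i < n).
\]
The vector $A^n B_1$ can be uniquely expanded as $x_n B_1 + x_{n-1} AB_1 + \cdots + x_1 A^{n-1}B_1$, because $P$ is invertible, so $gAg^{-1}\, e_n = (x_n, x_{n-1}, \ldots, x_1)^{T}$ is precisely the last column of the companion matrix $A_n$. Hence $gAg^{-1} = A_n$ and the first column of $gB$ equals $e_1$, as required. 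The remaining columns of $gB = P^{-1}B$ and the rows of $Cg^{-1} = CP$ are polynomial functions in the original generic entries of $B_2,\dots,B_m$ and of $C$, so they produce generic entries $b_{ij}$ (for $j\geq 2$) and a generic $p\times n$-matrix $C_{pn}$.

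The only real obstacle is the opening step — isolating the correct open condition and exhibiting one explicit canonical system that satisfies it — after which the argument reduces to the standard companion-matrix computation above. As a sanity check, the parameter count $n + n(m-1) + pn = (m+p)n$ agrees with the dimension of $\wis{sys}^c_{m,n,p}$ established earlier via the quiver interpretation, confirming that the proposed normal form indeed parametrizes a Zariski-dense subset.
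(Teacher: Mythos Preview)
Your proof is correct and follows essentially the same approach as the paper: both use the first column $B_1$ of $B$ as a (generically) cyclic vector for $A$, pass to the basis $\{B_1, AB_1, \dots, A^{n-1}B_1\}$ to put $A$ in companion form with $B_1 = e_1$, and then observe that the remaining stabilizer is trivial so the other columns of $B$ and all of $C$ remain free. Your version is simply more explicit (writing out $g=P^{-1}$, checking nonemptiness of the open set, and adding the dimension count), while the paper phrases the same argument in quiver language and leaves the companion-form computation implicit.
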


\begin{proof} A generic representation of the quiver-setting
\[
\xymatrix{\vtx{1} \ar[rr]^v & & \vtx{n} \ar@(ur,dr)^{A}} \]
will have the property that $v$ is a cyclic-vector for the matrix $A$, that is, $\{ v, Av, A^2v, \hdots, A^{n-1}v \}$ are linearly independent. But then, performing a base-change we get a representation of the form
\[
\xymatrix{\vtx{1} \ar[rrr]^{\begin{bmatrix} 1 & 0 & \hdots & 0 \end{bmatrix}^{tr}} & & & \vtx{n} \ar@(ur,dr)^{A_n}} \]
where $A_n$ is a companion matrix whose $n$-th column expresses the vector $-A^n v$ in the new basis. As the automorphism group of this representation is reduced to $\C^*(1,1_n)$, any general representation $V_{\Sigma}$ is isomorphic to one with $B_1 = \begin{bmatrix} 1 & 0 & \hdots & 0 \end{bmatrix}^{tr}$, $A=A_n$ and the other columns of $B$ and all rows of $C$ generic vectors.
\end{proof}

\begin{lemma} \label{lemma2} The following representations give a rational parametrization of the isomorphism classes of simple representations of these quiver-settings
\[
R_k~\qquad~:~\qquad~\xymatrix{\vtx{1} \ar@/^/[rrr]^{\begin{bmatrix} 1 & 0 & \hdots & 0 \end{bmatrix}^{tr}} & & & \vtx{k} \ar@/^/[rr]^{A_k} \ar@/^/[lll]^{\begin{bmatrix} y_1 & y_2 & \hdots & y_k \end{bmatrix}} & & \vtx{k} \ar@/^/[ll]^{1_k}} \]
and
\[
S_k~\qquad~:~\qquad~\xymatrix{\vtx{1} \ar@/^/[rrr]^{\begin{bmatrix} 1 & 0 & \hdots & 0 \end{bmatrix}^{tr}} & & & \vtx{k} \ar@/^/[rr]^{A^{\dagger}_k} \ar@/^/[lll]^{\begin{bmatrix} y_1 & y_2 & \hdots & y_k \end{bmatrix}} & & \vtx{k-1} \ar@/^/[ll]^{\begin{bmatrix} \underline{0} \\ 1_{k-1} \end{bmatrix}}} \]
where $A_k$ (reps. $A_k^{\dagger}$) is the generic $k \times k$ companion matrix (resp. the reduced $k-1 \times k$ companion matrix)
\[
A_k = \begin{bmatrix} 0 & 0 & \hdots & & x_k \\
1 & 0 & \hdots & & x_{k-1} \\
& \ddots & \ddots & & \vdots \\
& & \ddots & 0 & x_2 \\
& & & 1 & x_1 \end{bmatrix}~\quad~\text{and}~\quad~A_k^{\dagger} = \begin{bmatrix} 
1 & 0 & \hdots & & x_{k-1} \\
& \ddots & \ddots & & \vdots \\
& & \ddots & 0 & x_2 \\
& & & 1 & x_1 \end{bmatrix} \]
\end{lemma}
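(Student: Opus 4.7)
The plan is to mimic the normalization strategy used in the proof of Lemma~\ref{lemma1}: starting from a generic representation, one uses the $GL$-action at each vertex in sequence to bring the arrows into the stated canonical form, then reads off the parameter count and verifies the triviality of the residual stabilizer (so that the generic representation is simple and the parametrization is dominant onto an open subset of the moduli space).

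For $R_k$ with dimension vector $(1,k,k)$, the arrow $V_3\to V_2$ is generically invertible, so the $GL_k$-action at vertex $3$ normalizes it to $1_k$; the stabilizer of this choice pairs the base changes at $V_2$ and $V_3$ diagonally. The residual data consists of a loop at $V_2$ (given by the original $V_2\to V_3$ matrix) together with the two arrows between $V_1$ and $V_2$, which is precisely the quiver-setting of Lemma~\ref{lemma1}. Applying that lemma brings the loop to $A_k$ and the column $V_1\to V_2$ to $[1,0,\ldots,0]^{tr}$, leaving the $1\times k$ arrow $V_2\to V_1$ with $k$ free entries $y_j$. The resulting $2k$ parameters match the dimension $1-\chi(\alpha,\alpha)=2k$, and the stabilizer reduces to the trivial $\C^*(1,1_k,1_k)$.

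For $S_k$ with dimension vector $(1,k,k-1)$, the $k\times(k-1)$ arrow $V_3\to V_2$ is generically of maximal rank $k-1$, so $GL_{k-1}$ at vertex $3$ combined with a partial basis change at $V_2$ normalizes it to $\begin{bmatrix}\underline{0}\\ 1_{k-1}\end{bmatrix}$. The stabilizer then consists of pairs with $V_2$-component $\begin{pmatrix}\lambda&0\\ c&g\end{pmatrix}$ coupled to $g$ at $V_3$, and the $\C^*\times\C^{k-1}$ freedom in $(\lambda,c)$ generically suffices to normalize the column $V_1\to V_2$ to $[1,0,\ldots,0]^{tr}$. The remaining gauge is the diagonal $GL_{k-1}$, acting on the $(k-1)\times k$ matrix $M$ of $V_2\to V_3$ by left-multiplication on its first column and by conjugation on the $(k-1)\times(k-1)$ sub-block formed by columns $2,\ldots,k$. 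Generically the first column is a cyclic vector for that sub-block, so Lemma~\ref{lemma1} applies once more and brings $M$ into the form $A_k^{\dagger}$. Together with the $k$ free entries $y_j$ of $V_2\to V_1$ this yields the $2k-1$ parameters equal to $1-\chi(\alpha,\alpha)$ and trivializes the stabilizer.

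The delicate step, which I expect to be the main obstacle, is the verification in the $S_k$ case that the residual action leaves exactly the asymmetric $A_k^{\dagger}$-shape rather than some other reduced companion form: one must confirm that the cyclic-vector reduction applied to the restricted sub-block places the nontrivial entries of $M$ precisely in the last column, in the reversed order $(x_{k-1},\ldots,x_1)$. This is essentially an ordering argument for the iterated orbit of $e_1$ under the composite endomorphism $V_2\to V_3\to V_2$, which after the first two normalizations annihilates $e_1$ in its first coordinate and shifts it into $\langle e_2,\ldots,e_k\rangle$. Once this is carried out, simplicity of the generic $R_k$ and $S_k$ follows from the triviality of the stabilizer via the standard correspondence between stabilizers and endomorphism rings of quiver representations.
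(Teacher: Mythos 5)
Your argument is correct and lands on the stated normal forms, but it takes a genuinely different route from the paper's, most visibly for $S_k$. The paper does not gauge-fix arrow by arrow: it invokes the first fundamental theorem of $GL_n$-invariants to contract a vertex outright. For $R_k$ it contracts the right-most vertex, replacing the pair $(X,Y)$ by the loop $Y.X$ at the middle vertex --- for invertible $Y$ this is the same as your normalization $Y=1_k$, so there the two proofs essentially coincide. For $S_k$, however, the paper contracts the \emph{middle} $k$-dimensional vertex, obtaining a two-vertex quiver on dimensions $(1,k-1)$ with a loop at each vertex and data $(w^{\tau}v,\,Xv,\,w^{\tau}Y,\,XY)$; Lemma~\ref{lemma1} applied at the $(k-1)$-vertex then gives $Xv=e_1$ and $XY=A_{k-1}$, which is exactly the decomposition $A_k^{\dagger}=[\,e_1\mid A_{k-1}\,]$, while the loop $w^{\tau}v$ at the $1$-vertex accounts for the free parameter $y_1$. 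Your ``delicate step'' --- checking that the residual action (left multiplication on the first column of $M$, conjugation on the block of columns $2,\dots,k$) produces precisely the shape $A_k^{\dagger}$ --- is the same cyclic-vector computation, just carried out before rather than after eliminating a vertex; your worry about the ordering of the $x_i$ evaporates once one notes that columns $2,\dots,k$ of $A_k^{\dagger}$ are literally the companion matrix $A_{k-1}$ in the paper's convention. What your version buys is an explicit chain of base changes; what the FFT version buys is that the identification of \emph{isomorphism classes} (rather than orbits of a generic slice) comes for free, since the contracted quiver's quotient variety is isomorphic to the original one.

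One step you should tighten: triviality of the stabilizer gives $\mathrm{End}=\C$, i.e.\ that the generic representation is a brick, and for quiver representations this does \emph{not} imply simplicity (the representation $\C \xrightarrow{\,1\,} \C$ of the $A_2$-quiver is a brick with the proper subrepresentation $(0,\C)$). Simplicity of the generic $R_k$ and $S_k$ should instead be deduced either from the correspondence with canonical linear systems set up in Section~2 (canonical $\Leftrightarrow$ simple, which is how the paper's chain of reductions ultimately justifies it), or from the openness of simplicity together with the existence of simples for these dimension vectors.
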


\begin{proof} By invoking the first fundamental theorem of $GL_n$-invariants (see for example \cite[Thm. II.4.1]{Kraft}) we can in case $R_k$ eliminate the base-change action in the right-most vertex, giving a natural one-to-one correspondence between isoclasses of representations
\[
\xymatrix{\vtx{1} \ar@/^/[rr]^v & & \vtx{k} \ar@/^/[rr]^X \ar@/^/[ll]^{w^{\tau}} & & \vtx{k} \ar@/^/[ll]^Y} \quad \leftrightarrow \quad \xymatrix{\vtx{1}  \ar@/^/[rr]^v & & \vtx{k} \ar@/^/[ll]^{w^{\tau}}  \ar@(ur,dr)^{Y.X}} \]
and hence the claim follows from the previous lemma. As for case $S_k$ we can again apply the first fundamental theorem for $GL_n$-invariants, now with respect to the base-change action in the middle vertex, to obtain a natural one-to-one correspondence between isoclasses of representations
\[
\xymatrix{\vtx{1} \ar@/^/[rr]^v & & \vtx{k} \ar@/^/[rr]^X \ar@/^/[ll]^{w^{\tau}} & & \vtx{k-1} \ar@/^/[ll]^Y} \quad \leftrightarrow \quad \xymatrix{\vtx{1} \ar@(ul,dl)_{w^{\tau}.v} \ar@/^/[rr]^{X.v} & & \vtx{k-1} \ar@(ur,dr)^{X.Y} \ar@/^/[ll]^{w^{\tau}.Y}} \]
and again the claim follows from the previous lemma, taking into account the extra free loop in the left-most vertex, which corresponds to $y_1$.
\end{proof}

\begin{lemma} \label{case1}
The following representations give a rational parametrization for the isomorphism classes of simple representations of the quiver-setting
\[
\xymatrix{\vtx{1} \ar@/^/[rr]^{1} & & \vtx{1} \ar@/^/[ll]^{z} \ar@/^/[rrr]^{\begin{bmatrix} 1 & 0 & \hdots & 0 \end{bmatrix}^{tr}} & & & \vtx{k} \ar@/^/[rr]^{A^{\dagger}_k} \ar@/^/[lll]^{\begin{bmatrix} y_1 & y_2 & \hdots & y_k \end{bmatrix}} & & \vtx{k-1} \ar@/^/[ll]^{\begin{bmatrix} \underline{0} \\ 1_{k-1} \end{bmatrix}} \ar@/^/[rr]^{1_{k-1}} & & \vtx{k-1} \ar@/^/[ll]^{B}} 
\]
where $B$ is a generic $k-1 \times k-1$ matrix and, as before, $A_k^{\dagger}$ is a reduced generic companion matrix.
\end{lemma}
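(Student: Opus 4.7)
The plan is to follow the pattern of Lemma \ref{lemma2} and iteratively apply the first fundamental theorem for $GL_n$-invariants (\cite[Thm. II.4.1]{Kraft}) to eliminate the two outermost vertices, reducing to a setting that can be handled by the cyclic-vector argument underlying Lemma \ref{lemma1}.

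First I would eliminate the rightmost vertex (dimension $k-1$): its only arrows form the pair $(1_{k-1}, B)$, which the FFT collapses to a single loop $B \cdot 1_{k-1} = B$ — a generic $(k-1) \times (k-1)$ matrix — at the adjacent dimension-$(k-1)$ vertex. Analogously, I would eliminate the leftmost dimension-$1$ vertex, collapsing the pair $(1, z)$ to a scalar loop $z$ at the adjacent dimension-$1$ vertex. These two reductions leave the $S_k$-setting of Lemma \ref{lemma2} decorated with a free scalar loop $z$ at the leftmost remaining vertex and a free matrix loop $B$ at the rightmost remaining vertex.

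Next I would apply the FFT at the central dimension-$k$ vertex exactly as in the $S_k$-case of the proof of Lemma \ref{lemma2}. This produces a two-vertex quiver of dimensions $1$ and $k-1$ with: an arrow $A_k^{\dagger} \cdot [1,0,\ldots,0]^{tr} = [1,0,\ldots,0]^{tr}$ (the first column of $A_k^{\dagger}$, viewed in $\C^{k-1}$) going right; an arrow $[y_1,\ldots,y_k] \cdot \begin{bmatrix} \underline{0} \\ 1_{k-1} \end{bmatrix} = [y_2,\ldots,y_k]$ going back; scalar loops $z$ and $y_1$ at the dimension-$1$ vertex; and matrix loops $B$ and $A_k^{\dagger} \cdot \begin{bmatrix} \underline{0} \\ 1_{k-1} \end{bmatrix}$ at the dimension-$(k-1)$ vertex. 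A direct column-by-column computation shows that the last composition equals the full $(k-1) \times (k-1)$ companion matrix $A_{k-1}$ with entries $x_1, \ldots, x_{k-1}$.

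At this point the cyclic-vector argument of Lemma \ref{lemma1} applies: a generic representation of the reduced quiver has $[1,0,\ldots,0]^{tr}$ cyclic for the loop $A_{k-1}$, so the $GL_{k-1}$-action simultaneously normalises the arrow to the standard basis vector and puts $A_{k-1}$ in companion form; the residual $\C^* \cdot 1_{k-1}$ stabiliser acts trivially on $B$ by conjugation, so the extra loop $B$ remains an independent generic $(k-1) \times (k-1)$ matrix. The main obstacle is the bookkeeping: one must check that reversing the three FFT reductions recovers precisely the matrices stated in the lemma, and that the resulting $k^2+1$ free parameters $(x_i, y_j, z$, and the entries of $B)$ are algebraically independent and match the expected dimension of the component of simple representations. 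Both verifications amount to tracking the reductions through and comparing with the direct dimension count for the quiver.
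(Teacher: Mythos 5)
Your proof is correct, and it is essentially the route the paper waves at in its closing sentence (``Alternatively, we can use the first fundamental theorem\dots'') but does not carry out. The paper's primary argument is shorter and avoids all the composition bookkeeping: it observes that the three middle vertices form exactly the $S_k$ setting of Lemma~\ref{lemma2}, whose generic representations are already simple with automorphism group reduced to $\C^*(1,1_k,1_{k-1})$, and then uses the as-yet-unspent base-change freedom at each of the two newly attached end vertices to normalise one arrow of each new pair to the identity, leaving the other ($z$, resp.\ $B$) generic. Your cascade of FFT reductions buys a concrete verification instead: eliminating the outer vertices yields loops $z$ and $B$, eliminating the central $k$-vertex yields $A_k^{\dagger}\cdot[1,0,\dots,0]^{tr}=e_1$, $[y_1,\dots,y_k]\cdot\begin{bmatrix}\underline{0}\\ 1_{k-1}\end{bmatrix}=[y_2,\dots,y_k]$ and $A_k^{\dagger}\cdot\begin{bmatrix}\underline{0}\\ 1_{k-1}\end{bmatrix}=A_{k-1}$ (all correct), and your parameter count $k^2+1$ does match the dimension of the quotient variety. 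One small imprecision: once $(e_1,A_{k-1})$ is fixed by the cyclic-vector argument, the residual stabiliser inside $GL_{k-1}$ is trivial (a $g$ fixing $e_1$ and commuting with $A_{k-1}$ fixes the whole cyclic basis); the $\C^*$ you mention is the global scalar subgroup acting at all vertices at once, which indeed acts trivially on $B$ and the remaining data, so your conclusion stands.
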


\begin{proof} 
Forgetting the end-vertices (and maps to and from them) we are in the situation of the previous lemma. For general values these are simple quiver-representations and hence the automorphism group is reduced to $\C^*(1,1_k,1_{k-1})$. If we now add the end vertices we can use base-change in them to force one of the two arrows to be the identity map, leaving the remaining map generic. Alternatively, we can use the first fundamental theorem of $GL_n$-invariants as before, to obtain the claimed result.
\end{proof}

\section{Luna slices and the action map}

We quickly recall the basic strategy of \cite{LBbraid1}. As the central generator $c=(\sigma_1 \sigma_2)^3 = (\sigma_1 \sigma_2 \sigma_1)^2$ of $B_3$ acts via a scalar $\lambda \in \C^*$ on any irreducible $B_3$-representation it suffices to study irreducible representations of the quotient group $B_3 / \langle c \rangle \simeq C_2 \ast C_3 = \langle s,t~|~s^2=e=t^3 \rangle$ where $s$ is the class of $\sigma_1 \sigma_2 \sigma_1$ and $t$ that of $\sigma_1 \sigma_2$. Note that this quotient-group is isomorphic to the modular group $PSL_2(\Z)$. The action of $s$ and $t$ on a finite dimensional $C_2 \ast C_3$-representation $V$ induce two decompositions of $V$ into eigen-spaces
\[
V_+ \oplus V_- = V = V_1 \oplus V_{\rho} \oplus V_{\rho^2} \]
where $\rho$ is a primitive $3$-rd root of unity. Hence $V$ is fully determined by a base-change matrix $B=(B_{ij})_{1 \leq i \leq 3, 1 \leq j \leq 2}$ from a fixed basis compatible with the first decomposition to a fixed basis compatible with the second, that is by a representation of the quiver-setting
\[
\xymatrix@=.4cm{
& & & & \vtx{x} \\
\vtx{a} \ar[rrrru]^(.3){B_{11}} \ar[rrrrd]^(.3){B_{21}} \ar[rrrrddd]_(.2){B_{31}} & & & & \\
& & & & \vtx{y} \\
\vtx{b} \ar[rrrruuu]_(.7){B_{12}} \ar[rrrru]_(.7){B_{22}} \ar[rrrrd]_(.7){B_{32}} & & & & \\
& & & & \vtx{z}}
\]
Bruce Westbury observed in \cite{Westbury} that under this correspondence isoclasses of $C_2 \ast C_3$-representations coincide with isoclasses of quiver-representations, and that irreducible group-representations correspond to stable quiver-representations wrt. the stability structure $\theta=(-1,-1;1,1,1)$. It then follows from this stability condition that the dimension-vectors $\sigma = (a,b;x,y,z)$ containing a Zariski open subset of irreducible $n$-dimensional $C_2 \ast C_3$-representations must satisfy $a+b=n=x+y+z$ as well as $max(x,y,z) \leq min(a,b)$. 

Working backwards, we obtain for each $\lambda \in \C^*$ an irreducible $B_3$-representation determined by the above base-change matrix $B$ via
\[
( \ast )  \begin{cases}
\sigma_1 \mapsto  \lambda^{1/6} B^{-1} \begin{bmatrix} 1_{x} & 0 & 0 \\ 0 & \rho^2 1_{y} & 0 \\ 0 & 0 & \rho 1_{z} \end{bmatrix} B \begin{bmatrix} 1_{a} & 0 \\ 0 & -1_{b} \end{bmatrix} \\
\sigma_2 \mapsto  \lambda^{1/6} \begin{bmatrix} 1_{a} & 0 \\ 0 & -1_{b} \end{bmatrix} B^{-1}  \begin{bmatrix} 1_{x} & 0 & 0 \\ 0 & \rho^2 1_{y} & 0 \\ 0 & 0 & \rho 1_{z} \end{bmatrix} B
\end{cases}
\]
Observe that in lifting irreducibles from $C_2 \ast C_3$ to $B_3$ we get an action by multiplication of $6$-th roots of unity on the components which contain irreducibles, which accounts for the fact that the irreducible  components $X_{\sigma}$ containing irreducible $B_3$-representations are classified by the dimension vectors $\sigma=(a,b;x,y,z)$ as above with the extra condition that $b=min(a,b)$ and $x=max(x,y,z)$. We will now construct  special semi-simple $C_2 \ast C_3$-representations $M_0$ in every component, with all its irreducible factors being $1$- or $2$-dimensional.

There are $6$ one-dimensional irreducible $C_2 \ast C_3$-representations, coresponding to the quiver-representations $S_i$ for $1 \leq i \leq 6$:
\[
\begin{array}{c|c|c|c|c|c}
\underbrace{\xymatrix@=.001cm{
& & & & \vtx{1} \\
\vtx{1} \ar[rrrru]^1  & & & & \\
& & & & \vtx{0} \\
\vtx{0}  & & & & \\
& & & & \vtx{0}}}_{S_1} &
\underbrace{\xymatrix@=.001cm{
& & & & \vtx{0} \\
\vtx{0}  & & & & \\
& & & & \vtx{1} \\
\vtx{1}  \ar[rrrru]^1  & & & & \\
& & & & \vtx{0}}}_{S_2} &
\underbrace{\xymatrix@=.001cm{
& & & & \vtx{0} \\
\vtx{1}  \ar[rrrrddd]^1 & & & & \\
& & & & \vtx{0} \\
\vtx{0}  & & & & \\
& & & & \vtx{1}} }_{S_3}
&
\underbrace{\xymatrix@=.001cm{
& & & & \vtx{1} \\
\vtx{0}  & & & & \\
& & & & \vtx{0} \\
\vtx{1} \ar[rrrruuu]^1  & & & & \\
& & & & \vtx{0}}}_{S_4} &
\underbrace{\xymatrix@=.001cm{
& & & & \vtx{0} \\
\vtx{1}  \ar[rrrrd]^1  & & & & \\
& & & & \vtx{1} \\
\vtx{0}  & & & & \\
& & & & \vtx{0}}}_{S_5} &
\underbrace{\xymatrix@=.001cm{
& & & & \vtx{0} \\
\vtx{0}  & & & & \\
& & & & \vtx{0} \\
\vtx{1}  \ar[rrrrd]^1 & & & & \\
& & & & \vtx{1}} }_{S_6}
\end{array}
\]
and three one-parameter families of two-dimensional irreducibles corresponding to the quiver-representations $T_i(q)$ for $q \not= 0,1$ and $1 \leq i \leq 3$
\[
\begin{array}{c|c|c}
\underbrace{\xymatrix@=.1cm{
& & & & \vtx{1} \\
\vtx{1} \ar[rrrru]|(0.6){q} \ar[rrrrd]|(0.6){1}  & & & & \\
& & & & \vtx{1} \\
\vtx{1} \ar[rrrru]|(0.6){1} \ar[rrrruuu]|(0.7){1}  & & & & \\
& & & &  \vtx{0} }}_{T_1(q)}
&
\underbrace{\xymatrix@=.1cm{
& & & & \vtx{1} \\
\vtx{1} \ar[rrrru]|(0.6){q}  \ar[rrrrddd]|(0.6){1} & & & & \\
& & & & \vtx{0} \\
\vtx{1}  \ar[rrrruuu]|(0.6){1} \ar[rrrrd]|(0.6){1} & & & & \\
& & & &  \vtx{1} }}_{T_2(q)} &
\underbrace{\xymatrix@=.1cm{
& & & & \vtx{0} \\
\vtx{1}  \ar[rrrrd]|(0.6){q} \ar[rrrrddd]|(0.7){1} & & & & \\
& & & & \vtx{1} \\
\vtx{1} \ar[rrrru]|(0.6){1}  \ar[rrrrd]|(0.6){1} & & & & \\
& & & &  \vtx{1} }}_{T_(q)}
\end{array}
\]
The semi-simple representation
\[
M_0 = S_1^{\oplus a_1} \oplus S_2^{\oplus a_2} \oplus S_3^{\oplus a_3} \oplus S_4^{\oplus a_4} \oplus S_5^{\oplus a_5} \oplus S_6^{\oplus a_6} \oplus T_1(q)^{\oplus b_{\alpha}} \oplus T_2(q)^{\oplus b_{\beta}} \oplus T_3(q)^{\oplus b_{\gamma}} \]
clearly belongs to the component $X_{\sigma}$ with dimension vector $\sigma=(a,b;x,y,z)$ where
\[
\begin{cases}
a=a_1+a_3+a_5+b_{\alpha} + b_{\beta} \\
b=a_2+a_4+a_6+b_{\alpha}+b_{\gamma} \\
x=a_1+a_4+b_{\alpha}+b_{\beta} \\
y=a_2+a_5+b_{\alpha}+b_{\gamma} \\
z=a_3+a_6+b_{\beta}+b_{\gamma}
\end{cases}
\]
and is fully determined by the base-change matrix $B_0$ with block-form as above
\[
\begin{array}{|cccccc|cccccc|}
1_{a_1} & 0 & 0 & 0 & 0 & 0 & 0 & 0 & 0 & 0 & 0 & 0 \\
0 & 0 & 0 & 0 & 0 & 0 & 0 & 1_{a_4} & 0 & 0 & 0 & 0 \\
0 & 0 & 0 & q 1_{b_{\alpha}}  & 0 & 0 & 0 & 0 & 0 & 1_{b_{\alpha}} & 0 & 0 \\
0 & 0 & 0 & 0 & q 1_{b_{\beta}} & 0 & 0 & 0 & 0 & 0 & 1_{b_{\beta}} & 0  \\
\hline
0 & 0 & 0 & 0 & 0 & 0 & 1_{a_2} & 0 & 0 & 0 & 0 & 0 \\
0 & 0 & 1_{a_5} & 0 & 0 & 0 & 0 & 0 & 0 & 0 & 0 & 0 \\
0 & 0 & 0 & 1_{b_{\alpha}} & 0 & 0 & 0 & 0 & 0 & 1_{b_{\alpha}} & 0 & 0 \\
0 & 0 & 0 & 0 & 0 & q 1_{b_{\gamma}}  & 0 & 0 & 0 & 0 & 0 & 1_{b_{\gamma}} \\
\hline
0 & 1_{a_3} & 0 & 0 & 0 & 0 & 0 & 0 & 0 & 0 & 0 & 0 \\
0 & 0 & 0 & 0 & 0 & 0 & 0 & 0 & 1_{a_6} & 0 & 0 & 0 \\
0 & 0 & 0 & 0 & 1_{b_{\beta}} & 0 & 0 & 0 & 0 & 0 & 1_{b_{\beta}} & 0 \\
0 & 0 & 0 & 0 & 0 & 1_{b_{\gamma}} & 0 & 0 & 0 & 0 & 0 & 1_{b_{\beta}} 
\end{array} 
\]
We will now determine the structure of the base-change matrices $B$ of isoclasses of $C_2 \ast C_3$-representations $M$ in a Zariski open neighborhood of $[M_0]$ in $\wis{iss}_{\sigma}~C_2 \ast C_3$.

As $M_0$ is semi-simple, its isomorphism class forms a Zariski closed orbit $\Oscr(M_0)$ in the smooth irreducible component $\wis{rep}_{\sigma}~C_2 \ast C_3$ under the action of $GL(\sigma) = GL_a \times GL_b \times GL_x \times GL_y \times GL_z$. The stabilizer subgroup $Stab(M_0)$ is the automorphism group and is the subgroup of $GL(\sigma)$ we will denote by $GL(\tau) = GL_{a_1} \times GL_{a_2} \times GL_{a_3} \times GL_{a_4} \times GL_{a_5} \times GL_{a_6} \times GL_{b_{\alpha}} \times GL_{b_{\beta}} \times GL_{b_{\gamma}}$. The normal space to the orbit $\Oscr(M_0)$ can be identified as $GL(\tau)$-representation with the vectorspace of self-extensions $Ext^1_{C_2 \ast C_3}(M_0,M_0)$, see for example \cite[II.2.7]{Kraft}. The Luna slice theorem, see for example \cite[\S 4.2]{LBBook}, asserts that the action map
\[
GL(\sigma) \times^{GL(\tau)} Ext^1_{C_2 \ast C_3}(M_0,M_0) \rTo \wis{rep}_{\sigma}~C_2 \ast C_3 \]
sending the class of $(g,\vec{n})$ in the associated fibre bundle to the $C_2 \ast C_3$-representation $g.(M+\vec{n})$ is a $GL(\sigma)$-equivariant \'etale map with a Zariski dense image. Taking $GL(\sigma)$-quotients on both sides we obtain an \'etale map
\[
Ext^1_{C_2 \ast C_3}(M_0,M_0)/GL(\tau) \rTo \wis{iss}_{\sigma}~C_2 \ast C_3 \]
with a Zariski dense image. The crucial observation to make is that it follows from the theory of local quivers, \cite[\S 4.2]{LBBook}, that as a $GL(\tau)$-representation $Ext^1_{C_2 \ast C_3}(M_0,M_0)$ is isomorphic to $\wis{rep}_{\tau}~Q$ for the quiver $Q$ having $9$ vertices (one for each of the distinct simple factors of $M_0$) and having as many directed arrows from the vertex corresponding to the simple factor $S$ to that of the simple factor $T$ as is the dimension of the space $Ext^1_{C_2 \ast C_3}(S,T)$. This then allows to identify the quotient variety $Ext^1_{C_2 \ast C_3}(M_0,M_0)/GL(\tau)$ with the affine variety $\wis{iss}_{\tau}~Q$ whose points are the isoclasses of semi-simple representations of $Q$ of dimension-vector $\tau=(a_1,a_2,a_3,a_4,a_5,a_6,b_{\alpha},b_{\beta},b_{\gamma})$, and the action map induces an \'etale map with dense image
\[
\wis{iss}_{\tau}~Q \rTo \wis{iss}_{\sigma}~C_2 \ast C_3 \]

Computing the normal space to the orbit $\Oscr(M_0)$ as in the proof of \cite[Thm. 4]{LBbraid1} but for the more complicated representation $M_0$ one obtains that the sub quiver of $Q$ on the $6$ vertices corresponding to the $1$-dimensional simple components $S_1,\hdots,S_6$ coincides with that of \cite{LBbraid1}, that is corresponds to the quiver-setting
\[
\xymatrix@=1.1cm{
& \vtx{a_1} \ar@/^/[ld]^{C_{16}} \ar@/^/[rd]^{C_{12}} & \\
\vtx{a_6} \ar@/^/[ru]^{C_{61}}  \ar@/^/[d]^{C_{65}} & & \vtx{a_2} \ar@/^/[lu]^{C_{21}} \ar@/^/[d]^{C_{23}} \\
\vtx{a_5} \ar@/^/[u]^{C_{56}}  \ar@/^/[rd]^{C_{54}} & & \vtx{a_3} \ar@/^/[u]^{C_{32}} \ar@/^/[ld]^{C_{34}} \\
& \vtx{a_4} \ar@/^/[lu]^{C_{45}} \ar@/^/[ru]^{C_{43}}  & }
\]
The additional quiver-setting depending on the $3$ vertices corresponding to the $2$-dimensional simple factors $T_1(q)$, $T_2(q)$ and $T_3(q)$ can be verified to be
\[
\xymatrix@=1.1cm{
\vtx{a_3} \ar@/^/[rd]^{D_{3 \alpha}} \ar@{.}[rrrr] & & & & \vtx{a_2} \ar@/^/[ld]^{D_{2 \beta}} \\
& \vtx{b_{\alpha}} \ar@/^/[lu]^{D_{\alpha 3}} \ar@/^/[rr]^{F_{\alpha \beta}} \ar@(u,ur)^{E_{\alpha}} \ar@/^/[ld]^{D_{\alpha 6}} \ar@/^/[rdd]^{F_{\alpha \gamma}} & & \vtx{b_{\beta}} \ar@/^/[ll]^{F_{\beta \alpha}} \ar@/^/[ru]^{D_{\beta 2}} \ar@(u,ul)_{E_{\beta}} \ar@/^/[rd]^{D_{\beta 5}} \ar@/^/[ldd]^{F_{\beta \gamma}} & \\
\vtx{a_6} \ar@/^/[ru]^{D_{6 \alpha}} \ar@{.}[rdd] & & & & \vtx{a_5} \ar@/^/[lu]^{D_{5 \beta}} \ar@{.}[ldd]  \\
& & \vtx{b_{\gamma}} \ar@/^/[luu]^{F_{\gamma \alpha}} \ar@/^/[ruu]^{F_{\gamma \beta}} \ar@/^/[ld]^{D_{\gamma 1}} \ar@/^/[rd]^{D_{\gamma 4}} \ar@(ur,r)^{E_{\gamma}} & & \\
& \vtx{a_1} \ar@/^/[ru]^{D_{1 \gamma}} & & \vtx{a_4}  \ar@/^/[lu]^{D_{4 \gamma}} &}
\]
which concludes the proof of the following:

\begin{theorem}
The \'etale action map $GL(\sigma) \times^{GL(\tau)} \wis{rep}_{\tau}~Q \rTo \wis{rep}_{\sigma}~C_2 \ast C_3$ sends a $\tau$-dimensional $Q$-representation to the $C_2 \ast C_3$-representation determined by the base-change matrix $B$
{\tiny
\[
\begin{array}{|cccccc|cccccc|}
1_{a_1} & 0 & 0 & 0 & 0 & 0 & C_{21} & 0 & C_{61} & 0 & 0 & D_{\gamma 1} \\
0 & C_{34} & C_{54} & 0 & 0 & D_{\gamma 4} & 0 & 1_{a_4} & 0 & 0 & 0 & 0 \\
0 & D_{3 \alpha} & 0 & q 1_{b_{\alpha}} + E_{\alpha} & 0 & 0 & 0 & 0 & D_{6 \alpha} & 1_{b_{\alpha}} & 0 & F_{\gamma \alpha} \\
0 & 0 & 0 & 0 & q 1_{b_{\beta}} & F_{\gamma \beta} & 0 & 0 & 0 & 0 & 1_{b_{\beta}} & 0  \\
\hline
C_{12} & C_{32} & 0 & 0 & D_{\beta 2} & 0 & 1_{a_2} & 0 & 0 & 0 & 0 & 0 \\
0 & 0 & 1_{a_5} & 0 & 0 & 0 & 0 & C_{45} & C_{65} & 0 & D_{\beta 5} & 0 \\
0 & 0 & 0 & 1_{b_{\alpha}} & 0 & 0 & 0 & 0 & 0 & 1_{b_{\alpha}} & F_{\beta \alpha} & 0 \\
D_{1 \gamma} & 0 & 0 & 0 & F_{\beta \gamma} & q 1_{b_{\gamma}} + E_{\gamma} & 0 & D_{4 \gamma} & 0 & 0 & 0 & 1_{b_{\gamma}} \\
\hline
0 & 1_{a_3} & 0 & 0 & 0 & 0 & C_{23} & C_{43} & 0 & D_{\alpha 3} & 0 & 0 \\
C_{16} & 0 & C_{56} & D_{\alpha 6} & 0 & 0 & 0 & 0 & 1_{a_6} & 0 & 0 & 0 \\
0 & 0 & D_{5 \beta} & 0 & 1_{b_{\beta}}+E_{\beta} & 0 & D_{2 \beta} & 0 & 0 & F_{\alpha \beta} & 1_{b_{\beta}} & 0 \\
0 & 0 & 0 & F_{\alpha \gamma} & 0 & 1_{b_{\gamma}} & 0 & 0 & 0 & 0 & 0 & 1_{b_{\gamma}} 
\end{array} 
\]
}

\noindent
Under this map, simple $Q$-representations are mapped to irreducible $C_2 \ast C_3$-representations, and if the coefficients of the block-matrices $C_{ij},D_{ij},E_i$ and $F_{ij}$ occurring in $B$ give a parametrization of a Zariski open subset of the quotient variety $\wis{iss}_{\tau}~Q$, then the $n$-dimensional representations of the $3$-string braid group $B_3$ given by
\[
\begin{cases}
\sigma_1 \mapsto  \lambda^{1/6} B^{-1} \begin{bmatrix} 1_{x} & 0 & 0 \\ 0 & \rho^2 1_{y} & 0 \\ 0 & 0 & \rho 1_{z} \end{bmatrix} B \begin{bmatrix} 1_{a} & 0 \\ 0 & -1_{b} \end{bmatrix} \\
\sigma_2 \mapsto  \lambda^{1/6} \begin{bmatrix} 1_{a} & 0 \\ 0 & -1_{b} \end{bmatrix} B^{-1}  \begin{bmatrix} 1_{x} & 0 & 0 \\ 0 & \rho^2 1_{y} & 0 \\ 0 & 0 & \rho 1_{z} \end{bmatrix} B
\end{cases}
\begin{cases}
a=a_1+a_3+a_5+b_{\alpha} + b_{\beta} \\
b=a_2+a_4+a_6+b_{\alpha}+b_{\gamma} \\
x=a_1+a_4+b_{\alpha}+b_{\beta} \\
y=a_2+a_5+b_{\alpha}+b_{\gamma} \\
z=a_3+a_6+b_{\beta}+b_{\gamma}
\end{cases}
\]
contain a Zariski dense set of irreducible $B_3$-representations in the component $X_{\sigma}$ of $\wis{iss}_n~B_3$.
\end{theorem}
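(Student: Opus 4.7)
The strategy is to combine three ingredients already developed in the paper: (i) the Luna slice description, which identifies a Zariski-dense \'etale neighbourhood of the closed orbit $\Oscr(M_0)$ in $\wis{rep}_{\sigma}~C_2 \ast C_3$ with the associated fibre bundle $GL(\sigma) \times^{GL(\tau)} \wis{rep}_{\tau}~Q$; (ii) the explicit identification of the local quiver $Q$ at $M_0$, which has just been determined vertex-by-vertex in the discussion above; and (iii) the rational parametrizations of simple $Q$-representations supplied by Lemmas~\ref{lemma1}, \ref{lemma2} and~\ref{case1}.

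First I would make the Luna slice map explicit. The base-change matrix $B_0$ of $M_0$ already sits in $\wis{rep}_{\sigma}~C_2 \ast C_3$, and the normal directions to $\Oscr(M_0)$ are exactly tangent perturbations of $B_0$. Each summand of $Ext^1_{C_2 \ast C_3}(S,T)$, for $S,T$ ranging over the nine simple factors $S_1,\hdots,S_6,T_1(q),T_2(q),T_3(q)$, corresponds to a well-defined rectangular block of $B$ that was zero (or a fixed scalar block) in $B_0$ and becomes free in $B$. By going through each ordered pair of simples and matching positions in the $(a+b)\times(x+y+z)$ grid one verifies that the perturbed base-change matrix is precisely the matrix $B$ displayed in the statement, with the labels $C_{ij},D_{ij},E_i,F_{ij}$ naming the blocks in the positions predicted by the arrows of $Q$. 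The \'etale-with-dense-image property of the action map is then automatic from the Luna slice theorem applied to the closed orbit $\Oscr(M_0)$.

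Second, I would explain why simple $Q$-representations map to irreducible $B_3$-representations. A standard consequence of the local quiver description, see \cite[\S 4.2]{LBBook}, is that closed $GL(\tau)$-orbits of \emph{simple} $Q$-representations correspond under the \'etale slice map to closed $GL(\sigma)$-orbits of \emph{simple} $C_2 \ast C_3$-representations lying in the irreducible component $X_{\sigma}$. Feeding such a simple $C_2 \ast C_3$-representation through the formulas $(\ast)$ of Section~3 and choosing any $\lambda \in \C^*$ yields an irreducible $B_3$-representation whose class in $\wis{iss}_n~B_3$ lies in $X_{\sigma}$.

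Finally, to obtain a Zariski dense image in $\wis{iss}_{\sigma}~C_2 \ast C_3$ one has to rationally parametrize a Zariski open subset of $\wis{iss}_{\tau}~Q$. The quiver $Q$ decomposes, up to the extra loops $E_\alpha,E_\beta,E_\gamma$ and the triangular $F$-arrows joining $b_\alpha,b_\beta,b_\gamma$, into the $6$-vertex subquiver already treated in \cite{LBbraid1} plus three antenna-shaped pieces attached at the vertices $b_\alpha,b_\beta,b_\gamma$. By iteratively applying the first fundamental theorem of $GL_n$-invariants exactly as in the proofs of Lemmas~\ref{lemma2} and~\ref{case1}, one can absorb base changes at interior vertices and normalize the remaining arrows to the form appearing in the matrix $B$; the surviving free entries are precisely the coefficients of the $C_{ij},D_{ij},E_i,F_{ij}$, and for generic values they describe a simple representation of $Q$. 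The main obstacle in the whole argument is the bookkeeping step in the second paragraph: locating each $Ext^1$ direction inside $B$ without sign, transposition or ordering mistakes, which amounts to redoing the computation in the proof of \cite[Thm.~4]{LBbraid1} for the noticeably more intricate semi-simple representation $M_0$ involving the families $T_i(q)$.
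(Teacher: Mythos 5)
Your proposal is correct and follows essentially the same route as the paper: the Luna slice theorem at the closed orbit of $M_0$, the identification of the normal space $Ext^1_{C_2 \ast C_3}(M_0,M_0)$ with $\wis{rep}_{\tau}~Q$ via local quivers, and the block-by-block matching of $Ext^1$ directions with entries of the perturbed base-change matrix $B$ (extending the computation of \cite[Thm.~4]{LBbraid1} to the more complicated $M_0$). Note only that your final paragraph on actually parametrizing $\wis{iss}_{\tau}~Q$ is not needed here, since the theorem is stated conditionally and that work is deferred to Section~4.
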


\section{The main result}

In view of the previous section it remains to find for each $\sigma=(a,b;x,y,z)$ satisfying
\[
a+b=n=x+y+z \quad \text{and} \quad x=max(x,y,z) \leq b=min(a,b) \]
a judiciously chosen dimension-vector $\tau=(a_1,a_2,a_3,a_4,a_5,a_6,b_{\alpha},b_{\beta},b_{\gamma})$ of type $\sigma$ together with an explicit rational parametrization of $\wis{iss}_{\tau}~Q$. We will separate this investigation in two cases, sharing the same underlying strategy. 
First we choose $a_1,a_2,a_3,a_4,a_5,a_6$ such that $\sigma_1 = (a_1+a_3+a_5,a_2+a_4+a_6;a_1+a_4,a_2+a_5,a_3+a_6)$ is a component containing simples and such that we have an explicit rational parametrization of the isoclasses of the quiver-setting
\[
\xymatrix@=0.6cm{
& \vtx{a_1} \ar@/^/[ld] \ar@/^/[rd] & \\
\vtx{a_6} \ar@/^/[ru]  \ar@/^/[d] & & \vtx{a_2} \ar@/^/[lu] \ar@/^/[d] \\
\vtx{a_5} \ar@/^/[u]  \ar@/^/[rd] & & \vtx{a_3} \ar@/^/[u] \ar@/^/[ld] \\
& \vtx{a_4} \ar@/^/[lu] \ar@/^/[ru]  & }
\]
The upshot being that for a general representation the stabilizer subgroup reduces to $\C^* (1_{a_1} \times \hdots \times 1_{a_6})$. But then, the additional arrows $D_{ij}$ and $E_{i}$, that is the quiver setting
\[
\xymatrix@=0.6cm{
\vtx{a_3} \ar@/^/[rd] \ar@{.}[rrrr] & & & & \vtx{a_2} \ar@/^/[ld] \\
& \vtx{b_{\alpha}} \ar@/^/[lu]  \ar@(ur,dr) \ar@/^/[ld]  & & \vtx{b_{\beta}}  \ar@/^/[ru]  \ar@(dl,ul) \ar@/^/[rd]  & \\
\vtx{a_6} \ar@/^/[ru] \ar@{.}[rdd] & & & & \vtx{a_5} \ar@/^/[lu] \ar@{.}[ldd]  \\
& & \vtx{b_{\gamma}}   \ar@/^/[ld] \ar@/^/[rd] \ar@(ul,ur) & & \\
& \vtx{a_1} \ar@/^/[ru] & & \vtx{a_4}  \ar@/^/[lu]  &}
\]
give three settings corresponding to quiver settings of canonical linear systems with $m=p=a_i+a_{i+3}$ and the results of section~2 give a rational parametrization of the isoclasses and further reduces the stabilizer subgroup to $\C^*(1_{a_1} \times \hdots \times 1_{a_6} \times 1_{b_{\alpha}} \times 1_{b_{\beta}} \times 1_{b_{\gamma}})$. This then leaves the trivial action on the remaining arrows $F_{ij}$ and hence these generic matrices conclude the desired rational parametrization.

\subsection{Case 1 : $a > b$} Define $d=a-b$, $e=d-1$, $f=b-z$, $g=b-y$ and $h=b-x$, then the dimension-vector
\[
\tau=(a_1,a_2,a_3,a_4,a_5,a_6,b_{\alpha},b_{\beta},b_{\gamma}) = (d,e,e,0,1,1,f,g,h) \]
is of type $\sigma$. If we denote by 
\[
\begin{cases}
\ast &\quad~\text{ a generic matrix} \\
| &\quad~\text{the column vector}~ \begin{bmatrix} 1\\ 0 \\ \vdots\\ 0 \end{bmatrix} \\
\overline{1}_n &\quad~\text{the $n+1 \times n$ matrix}~\begin{bmatrix} \underline{0} \\ 1_n \end{bmatrix}
\end{cases}
\]
 and the (reduced) companion matrices as in lemma~\ref{lemma2}, then using lemma~\ref{case1} a rational parametrization of the first stage is given by the representations
\[
\xymatrix@=1.1cm{
& \vtx{d} \ar@/^/[ld]^{\ast} \ar@/^/[rd]^{A_d^{\dagger}} & \\
\vtx{1} \ar@/^/[ru]^{|}  \ar@/^/[d]^{\ast} & & \vtx{e} \ar@/^/[lu]^{\overline{1}_e} \ar@/^/[d]^{1_e} \\
\vtx{1} \ar@/^/[u]^{1}   & & \vtx{e} \ar@/^/[u]^{\ast}  \\
& \vtx{0}   & }
\]
By lemma~\ref{lemma1} a rational parametrization of the second stage is then given by the representations
\[
\xymatrix@=1.1cm{
\vtx{e} \ar@/^/[rd]^{\ast} \ar@{.}[rrrr] & & & & \vtx{d} \ar@/^/[ld]^{\ast} \\
& \vtx{f} \ar@/^/[lu]^{\ast}  \ar@(ur,dr)^{A_f} \ar@/^/[ld]^{\ast}  & & \vtx{g}  \ar@/^/[ru]^{\ast}  \ar@(dl,ul)^{A_g} \ar@/^/[rd]^{\ast}  & \\
\vtx{1} \ar@/^/[ru]^{|} \ar@{.}[rdd] & & & & \vtx{1} \ar@/^/[lu]^{|}   \\
& & \vtx{h}   \ar@/^/[ld]^{\ast}  \ar@(ul,ur)^{A_h} & & \\
& \vtx{d} \ar@/^/[ru]^{| \ast} & & \vtx{0}   &}
\]
This concludes the proof of

\begin{theorem} A Zariski dense rational parametrization of the component $X_{\sigma}$ of $\wis{iss}_n B_3$ where $\sigma=(a,b;x,y,z)$ with $a > b$ is given by the representations
\[
\begin{cases}
\sigma_1 \mapsto  \lambda^{1/6} B^{-1} \begin{bmatrix} 1_{x} & 0 & 0 \\ 0 & \rho^2 1_{y} & 0 \\ 0 & 0 & \rho 1_{z} \end{bmatrix} B \begin{bmatrix} 1_{a} & 0 \\ 0 & -1_{b} \end{bmatrix} \\
\sigma_2 \mapsto  \lambda^{1/6} \begin{bmatrix} 1_{a} & 0 \\ 0 & -1_{b} \end{bmatrix} B^{-1}  \begin{bmatrix} 1_{x} & 0 & 0 \\ 0 & \rho^2 1_{y} & 0 \\ 0 & 0 & \rho 1_{z} \end{bmatrix} B
\end{cases}
\]
for all $n \times n$ matrices $B$ of the form
\[
\begin{array}{|cccccc|ccccc|}
1_d & 0 & 0 & 0 & 0 & 0 & \overline{1}_e &  | & 0 & 0 & \ast \\
0 & \ast & 0 & q 1_f +A_f & 0 & 0 & 0 & | & 1_f  & 0 & \ast \\
0 & 0 & 0 & 0 & q 1_g & \ast & 0 & 0 & 0 & 1_g & 0 \\
\hline
A_d^{\dagger} & \ast & 0 & 0 & \ast & 0 & 1_e & 0 & 0 & 0 & 0 \\
0 & 0 & 1 & 0 & 0 & 0 & 0 & \ast & 0 & \ast & 0 \\
0 & 0 & 0 & 1_f & 0 & 0 & 0 & 0 & 1_f & \ast & 0 \\
| \ast & 0 & 0 & 0 & \ast & q 1_h + A_h & 0 & 0 & 0 & 0 & 1_h \\
\hline
0 & 1_e & 0 & 0 & 0 & 0 & 1_e & 0 & \ast & 0 & 0 \\
\ast & 0 & 1 & \ast & 0 & 0 & 0 & 1 & 0 & 0 & 0 \\
0 & 0 & | & 0 & 1_g + A_g & 0 & \ast & 0 & \ast & 1_g & 0 \\
0 & 0 & 0 & \ast & 0 & 1_h & 0 & 0 & 0 & 0 & 1_g
\end{array}
\]
where $d=a-b$, $e=d-1$, $f=b-z$, $g=b-y$ and $h=b-x$.
\end{theorem}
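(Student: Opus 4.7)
The plan is to instantiate the two-stage strategy outlined earlier in this section for the specific dimension-vector $\tau=(d,e,e,0,1,1,f,g,h)$, and then to assemble the rational parametrizations supplied by Lemmas~\ref{lemma1}, \ref{lemma2} and \ref{case1} into the block-form of the base-change matrix $B$ displayed in the statement. By the preceding \'etale action theorem, once a Zariski dense rational parametrization of $\wis{iss}_\tau Q$ is in hand, plugging the resulting generic $B$ into the formula $(\ast)$ yields a Zariski dense set of irreducible $B_3$-representations in $X_\sigma$.

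First I would verify that $\tau$ is of type $\sigma$, i.e.\ that substituting $a_1=d$, $a_2=a_3=e$, $a_4=0$, $a_5=a_6=1$, $b_\alpha=f$, $b_\beta=g$, $b_\gamma=h$ into the five defining relations recovers $(a,b;x,y,z)$. This is a direct numerical check using $d=a-b$, $e=d-1$ and $f+g+h=3b-n$, and it relies crucially on $a>b$ (so $d\geq 1$, $e\geq 0$) and on $x,y,z\leq b$ (so $f,g,h\geq 0$).

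For the first stage, the hexagonal subquiver on $a_1,\dots,a_6$ with dimension-vector $(d,e,e,0,1,1)$ is, once the zero vertex $a_4$ is discarded, exactly the setting of Lemma~\ref{case1}: the reduced-companion data $1_d,\, A_d^\dagger,\, \overline{1}_e$ is carried by $(a_1,a_2,a_3)=(d,e,e)$, while the $1$-dimensional vertices $(a_5,a_6)=(1,1)$ play the role of the two end vertices in that lemma. Applying the lemma places $1_d$, $A_d^\dagger$ and $\overline{1}_e$ in the indicated positions, realises the remaining hexagonal arrows as generic $\ast$-blocks, and reduces the stabilizer to $\C^*(1_{a_1}\times\cdots\times 1_{a_6})$. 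For the second stage, the arrows $D_{ij}$ together with the loop $E_i$ attached to each of $b_\alpha,b_\beta,b_\gamma$ form three independent canonical linear-system settings, all with $m=p=d$ since the two flanking $a$-vertices always have total dimension $e+1=d$; invoking Lemma~\ref{lemma1} on each of them supplies the companion loops of the form $q\,1_{(\cdot)}+A_{(\cdot)}$, the distinguished columns $|$, the generic rows $|\ast$ and generic blocks $\ast$ appearing in the corresponding rows of $B$, and further cuts the stabilizer down to $\C^*(1_{a_1}\times\cdots\times 1_{a_6}\times 1_{b_\alpha}\times 1_{b_\beta}\times 1_{b_\gamma})$. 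The remaining arrows $F_{ij}$ between the three $b$-vertices carry only the trivial action of this last torus, so they fill in as generic $\ast$-matrices and complete the block form.

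The main obstacle is purely notational bookkeeping: one has to match the generic canonical forms produced by the three lemmas, and the distinguished entries $1_d$, $\overline{1}_e$, $A_d^\dagger$ and the various $|$-columns, with the precise block positions of the $12\times 12$ diagram inherited from $B_0$, keeping careful track at each normalization step of which base-change torus is being absorbed and how the row/column block ordering dictated by $(a_1,\dots,a_6,b_\alpha,b_\beta,b_\gamma)$ translates into the three row-strips labelled by $x,y,z$ and the two column-strips labelled by $a,b$. Once this matching is performed and the resulting $B$ is substituted into $(\ast)$, the \'etale action theorem of the preceding section finishes the proof.
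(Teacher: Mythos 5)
Your proposal follows essentially the same route as the paper: the same dimension vector $\tau=(d,e,e,0,1,1,f,g,h)$, the first stage handled by Lemma~\ref{case1} on the hexagonal subquiver (with the zero vertex $a_4$ dropped), the second stage by Lemma~\ref{lemma1} applied to the three canonical-system settings with $m=p=d$, generic matrices on the $F_{ij}$, and the conclusion via the \'etale action theorem of the preceding section. The level of detail you defer to ``notational bookkeeping'' is exactly what the paper also leaves to the displayed block matrix, so the argument is complete to the same standard.
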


\subsection{Case 2 : $a=b$}  Define $c=x+y+1-a$, $g=a-y-1$ and $h=a-x$, which corresponds to the decomposition
\[
\xymatrix@=.1cm{
& & & & \vtx{x} \\
\vtx{a} \ar[rrrru] \ar[rrrrd] \ar[rrrrddd] & & & & \\
& & & & \vtx{y} \\
\vtx{a} \ar[rrrruuu] \ar[rrrru] \ar[rrrrd] & & & & \\
& & & & \vtx{z}} = 
\xymatrix@=.1cm{
& & & & \vtx{c} \\
\vtx{c} \ar[rrrru] \ar[rrrrd] \ar[rrrrddd] & & & & \\
& & & & \vtx{c-1} \\
\vtx{c} \ar[rrrruuu] \ar[rrrru] \ar[rrrrd] & & & & \\
& & & & \vtx{1}} \oplus
\xymatrix@=.1cm{
& & & & \vtx{1} \\
\vtx{1} \ar[rrrru]|(0.6){q}  \ar[rrrrddd]|(0.6){1} & & & & \\
& & & & \vtx{0} \\
\vtx{1}  \ar[rrrruuu]|(0.6){1} \ar[rrrrd]|(0.6){1} & & & & \\
& & & &  \vtx{1} }^{\oplus g} \oplus
\xymatrix@=.1cm{
& & & & \vtx{0} \\
\vtx{1}  \ar[rrrrd]|(0.6){q} \ar[rrrrddd]|(0.7){1} & & & & \\
& & & & \vtx{1} \\
\vtx{1} \ar[rrrru]|(0.6){1}  \ar[rrrrd]|(0.6){1} & & & & \\
& & & &  \vtx{1} }^{\oplus h}
\]
If $c$ is odd, define $c=2d+1$, $e=d+1$ and $f=d-1$, then the dimension vector
\[
\tau=(a_1,a_2,a_3,a_4,a_5,a_6,b_{\alpha},b_{\beta},b_{\gamma}) = (e,e,1,d,f,0,0,g,h) \]
is of type $\sigma$. Then, using lemma~\ref{lemma2} a rational parametrization for the first stage is given by the representations
\[
\xymatrix@=1.1cm{
& \vtx{e}  \ar@/^/[rd]^{1_e} & \\
\vtx{0}  & & \vtx{e} \ar@/^/[lu]^{A_e} \ar@/^/[d]^{\ast} \\
\vtx{f} \ar@/^/[rd]^{\overline{1}_f}   & & \vtx{1} \ar@/^/[u]^{|} \ar@/^/[ld]^{|} \\
& \vtx{d} \ar@/^/[lu]^{A_d^{\dagger}} \ar@/^/[ru]^{\ast}  & }
\]
Using lemma~\ref{lemma1} we then get that a rational parametrization of the second stage is given by the following representations
\[
\xymatrix@=0.9cm{
\vtx{1} \ar@{.}[rrrr] & & & & \vtx{e} \ar@/^/[ld]^{ | \ast} \\
& \vtx{0}   & & \vtx{g}  \ar@/^/[ru]^{\ast}  \ar@(dl,ul)^{A_g} \ar@/^/[rd]^{\ast}  & \\
\vtx{0}  & & & & \vtx{f} \ar@/^/[lu]^{\ast}  \ar@{.}[ldd]  \\
& & \vtx{h}   \ar@/^/[ld]^{\ast}  \ar@(ul,ur)^{A_h} \ar@/^/[rd]^{\ast} & & \\
& \vtx{e} \ar@/^/[ru]^{| \ast} & & \vtx{d} \ar@/^/[lu]^{\ast}  &}
\]
If $c$ is even, we can define $c=2e$ and $f=e-1$ in which case the dimension vector
\[
\tau=(a_1,a_2,a_3,a_4,a_5,a_6,b_{\alpha},b_{\beta},b_{\gamma}) = (e,e,1,e,f,0,0,g,h) \]
is of type $\sigma$ and exactly the same representations give a rational parametrization of both stages if we replace all occurrences of $d$ by $e$. This then concludes the proof of

\begin{theorem} A Zariski dense rational parametrization of the component $X_{\sigma}$ of $\wis{iss}_n B_3$ where $\sigma=(a,b;x,y,z)$ with $a = b$ is given by the representations
\[
\begin{cases}
\sigma_1 \mapsto  \lambda^{1/6} B^{-1} \begin{bmatrix} 1_{x} & 0 & 0 \\ 0 & \rho^2 1_{y} & 0 \\ 0 & 0 & \rho 1_{z} \end{bmatrix} B \begin{bmatrix} 1_{a} & 0 \\ 0 & -1_{b} \end{bmatrix} \\
\sigma_2 \mapsto  \lambda^{1/6} \begin{bmatrix} 1_{a} & 0 \\ 0 & -1_{b} \end{bmatrix} B^{-1}  \begin{bmatrix} 1_{x} & 0 & 0 \\ 0 & \rho^2 1_{y} & 0 \\ 0 & 0 & \rho 1_{z} \end{bmatrix} B
\end{cases}
\]
for all $n \times n$ matrices $B$ of the form
\[
\begin{array}{|ccccc|cccc|}
1_{e} & 0 & 0 &  0 & 0 & A_e & 0 &  0 & \ast \\
0 & | & \overline{1}_f &  0 & \ast & 0 & 1_{d} &  0 & 0 \\
0 & 0 & 0 &  q 1_{g} & \ast & 0 & 0 &  1_{g} & 0  \\
\hline
1_e & | & 0 &  \ast & 0 & 1_{e} & 0 &  0 & 0 \\
0 & 0 & 1_{f} &  0 & 0 & 0 & A_d^{\dagger} &  \ast & 0 \\
| \ast & 0 & 0 &  \ast & q 1_{h} + A_h & 0 & \ast &  0 & 1_{h} \\
\hline
0 & 1 & 0 &  0 & 0 & \ast & \ast &  0 & 0 \\
0 & 0 & \ast &  1_{g}+A_g & 0 & | \ast & 0 &   1_{g} & 0 \\
0 & 0 & 0 &  0 & 1_{h} & 0 & 0 &  0 & 1_{h} 
\end{array} 
\]
where $g=a-y-1$, $h=a-x$ and if $c=x+y+1-a$ is odd we take $c=2d+1$, $e=d+1$ and $f=d-1$ whereas if $c=x+y+1-a$ is even we take $c=2e$ and $f=e-1$ and we replace all occurrences of $d$ in the matrix to $e$.
\end{theorem}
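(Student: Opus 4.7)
The plan is to apply the preceding theorem, which reduces the problem to exhibiting an explicit rational parametrization of $\wis{iss}_\tau~Q$ for a cleverly chosen $\tau$ of type $\sigma$. First I would verify the bookkeeping that the dimension vector
\[
\tau=(a_1,\ldots,a_6,b_{\alpha},b_{\beta},b_{\gamma}) = (e,e,1,d,f,0,0,g,h)
\]
(with the stated $e,f$ in the odd case and the indicated replacement $d \mapsto e$ in the even case) really is of type $\sigma$, i.e.\ satisfies $a=a_1+a_3+a_5+b_\alpha+b_\beta$, $b=a_2+a_4+a_6+b_\alpha+b_\gamma$, $x=a_1+a_4+b_\alpha+b_\beta$, $y=a_2+a_5+b_\alpha+b_\gamma$ and $z=a_3+a_6+b_\beta+b_\gamma$. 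Using $b_\alpha=a_6=0$ and the definitions $c=x+y+1-a$, $g=a-y-1$, $h=a-x$, this reduces in the odd subcase $c=2d+1$ to checking $a=(d+1)+1+(d-1)=2d+1=c+x+y-a-x=y+c-a+\cdots$ and a handful of similar linear identities; the even subcase is identical upon substituting $d\mapsto e$.

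Next I would carry out the two-stage construction announced at the start of Section~4. For the first stage one inspects the sub-quiver on the six vertices $a_1,\ldots,a_6$; because $a_6=0$ the outer hexagon collapses to a chain of the shape covered by Lemma~\ref{case1} (with the cyclic vertex $\vtx{e}$-loop playing the role of the inner pair). Lemma~\ref{lemma2} (combined with Lemma~\ref{case1}) then produces the stated block shapes featuring $A_e$, $A_d^\dagger$, $\overline{1}_f$, the distinguished column $|$, and the generic blocks $\ast$; it simultaneously cuts the stabilizer of a generic representation down to the scalar torus $\C^*(1_e\times 1_e \times 1 \times 1_d \times 1_f)$. For the second stage I would fix such a generic first-stage representation and treat the arrows $D_{i\beta},D_{\beta j},E_\beta$ (resp.\ $D_{i\gamma},D_{\gamma j},E_\gamma$) as the data of a canonical linear system at the vertex $\vtx{b_\beta}$ (resp.\ $\vtx{b_\gamma}$), with input/output dimensions matching the prescription $m=p=a_i+a_{i+3}$ from Section~2. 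Applying Lemma~\ref{lemma1} to each of these two systems forces the loops into the companion forms $q1_g$ and $q1_h+A_h$ and their incoming/outgoing arrows into the shapes $|$, $|\ast$, $\ast$, and reduces the stabilizer further to $\C^*(1_e\times\cdots\times 1_f\times 1_g\times 1_h)$. Since $b_\alpha=0$, the arrows $F_{ij}$ between the $b$-vertices reduce to $F_{\beta\gamma}, F_{\gamma\beta}$, on which this residual scalar torus acts trivially, so leaving them as generic matrices $\ast$ completes a rational parametrization of a Zariski open subset of $\wis{iss}_\tau~Q$.

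Finally I would substitute the resulting block matrix $B$ into the formula $(\ast)$ supplied by the preceding theorem; since that theorem guarantees that the composition of the étale dense action map with the formula $(\ast)$ sends simple $Q$-representations to irreducible $B_3$-representations in $X_\sigma$, density of our parametrization of $\wis{iss}_\tau~Q$ propagates to density in $X_\sigma$ for each $\lambda\in\C^*$, which is exactly the assertion of the theorem. The main obstacle I expect is the purely combinatorial verification that the composite matrix genuinely equals the displayed $n\times n$ array in both parity subcases, and that the block-assembly of the two independent parametrizations really does cover a Zariski open subset of $\wis{iss}_\tau~Q$; the rest is a direct invocation of Lemmas~\ref{lemma1}, \ref{lemma2}, \ref{case1} and the preceding theorem.
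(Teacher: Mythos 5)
Your proposal follows essentially the same two-stage strategy as the paper: check that $\tau=(e,e,1,d,f,0,0,g,h)$ is of type $\sigma$, rationally parametrize the collapsed hexagon on the $a_i$-vertices first, then treat the loops and $D$-arrows at $b_\beta$ and $b_\gamma$ as canonical linear systems via Lemma~\ref{lemma1}, and finally leave the surviving $F$-arrows as generic matrices. One small correction: since here it is $a_6=0$ (not $a_4=0$), the hexagon collapses to the chain with dimensions $(e,e,1,d,f)$ having the $1$-vertex in the \emph{middle}, which is not the shape of Lemma~\ref{case1} (that shape, with the two dimension-$1$ vertices at one end, is what occurs in the case $a>b$); instead the paper glues an $R_e$-piece and an $S_d$-piece from Lemma~\ref{lemma2} at the common $1$-vertex, which is exactly where the blocks $A_e$, $A_d^{\dagger}$ and $\overline{1}_f$ in the displayed matrix $B$ come from.
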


\end{document}